\newtheorem{theorem}{Theorem}
\newtheorem{lemma}{Lemma}
\newtheorem{remark}{Remark}
\newtheorem{proposition}{Proposition}
\newtheorem{assumption}{Assumption}
\newtheorem*{theorem*}{Theorem}
\newtheorem*{example*}{Example} 
\newtheorem*{definition*}{Definition}
\newtheorem*{lemma*}{Lemma}
\newtheorem*{remark*}{Remark}
\newtheorem*{corollary*}{Corollary}
\newtheorem*{proposition*}{Proposition}
\newtheorem*{assumption*}{Assumption}
\newtheorem*{claim*}{Claim}
\renewcommand{\(}{ \left( }
\renewcommand{\)}{ \right) }
\newcommand{\<}{ \left< }
\renewcommand{\>}{ \right> }
\renewcommand{\Pr}{ \mathbb{P} }
\newcommand{\R}{\mathbb{R}}
\newcommand{\E}{\mathbb{E}}
\newcommand{\ene}{\mathcal{E}}
\newcommand{\lnorm}{\left\|}
\newcommand{\rnorm}{\right\|}
\newcommand{\lb}{\left(}
\newcommand{\rb}{\right)}
\newcommand{\lbm}{\left[}
\newcommand{\rbm}{\right]}
\newcommand{\N}{\mathbb{N}}
\title{Breaking a Logarithmic Barrier in the \emph{Stopping Time} Convergence Rate of Stochastic First-order Methods} 
\author{
Yasong Feng\thanks{Shanghai Center for Mathematical Sciences, Fudan University; email: \texttt{ysfeng20@fudan.edu.cn}.}\and
Yifan Jiang\thanks{Mathematical Institute, University of Oxford; email: \texttt{yifan.jiang@maths.ox.ac.uk}.}\and
Tianyu Wang\thanks{Shanghai Center for Mathematical Sciences, Fudan University; email: \texttt{wangtianyu@fudan.edu.cn}.}\and
Zhiliang Ying\thanks{Department of Statistics, Columbia University; email: \texttt{zying@stat.columbia.edu}.}}
\date{}
\begin{document}

\maketitle

\begin{abstract}
This work provides a novel convergence analysis for stochastic optimization in terms of stopping times, addressing the practical reality that algorithms are often terminated adaptively based on observed progress. Unlike prior approaches, our analysis: 1. Directly characterizes convergence in terms of stopping times adapted to the underlying stochastic process. 2. Breaks a logarithmic barrier in existing results. Key to our results is the development of a lemma to control the large deviation property of almost super-martingales. This lemma might be of broader interest. 
\end{abstract}




\section{Introduction}

In practice, people stop stochastic algorithms when the progress
meets certain criteria -- the time to terminate a stochastic algorithm is a stopping time. However, existing analyses of stochastic algorithms fail to directly capture this fundamental aspects of real-world algorithm deployment -- they either focus on asymptotic convergence behavior \cite[e.g.,][]{robbins1951stochastic,kiefer1952stochastic,doi:10.1137/0319007,bertsekas2000gradient,sebbouh2021almost}, or convergence rate in terms of a non-random iterate count \cite[e.g.,][]{nemirovski2009robust,lan2012optimal,harvey2019tight,lan2020first,liu2020improved,liu2023high,liu2023revisiting}. 
%
This critical gap between theory and practice motivates our work. 

\subsection{The Stopping Time Perspective}
In virtually all applications of stochastic optimization -- from training machine learning models to solving large-scale operations research problems -- algorithms are terminated based on dynamic stopping rules rather than a non-random iteration count. These stopping times typically take forms such as:
\begin{align*}
\tau &= \min \{ k : \|x_k - x_{k-1}\| \leq \varepsilon \} \quad \text{or} \quad \\
\tau &= \min \{ k : |f(x_k) - f(x_{k-1})| \leq \varepsilon \}, 
\end{align*}
where $\varepsilon >0$ is some acceptable tolerance level (e.g., $\varepsilon = 0.0001$), $f$ is an objective function, and $\{ x_k \}_k$ is governed by the stochastic algorithm. Such stopping rules are widely employed in practice -- See the works on early stopping \cite[e.g.,][]{prechelt2002early,dodge2020fine}, which are used as practical guidelines in machine learning and deep learning. 
This adaptive termination criterion reflects the reality that different problem instances and random seeds may require dramatically different numbers of iterations to reach comparable solution quality. 

Surprisingly, despite the wide adoption of such stopping rules in applications, the theoretical literature has almost exclusively focused on convergence guarantees for non-random iteration counts. This creates a significant gap between the theoretical understanding of algorithm performance and how algorithms are actually used. 

\subsection{State-of-the-Art Results and Limitations} 
The best known high-prob-ability convergence rate for stochastic first-order methods on smooth convex objectives,\textit{ when the learning rate does not depend on the {total} number of iterations}, takes the following form \citep{lan2012optimal,lan2020first,liu2023high,liu2023revisiting} (second item of Theorem 3.6 in \citep{liu2023high}, first item of Theorem 3.3 in \citep{liu2023revisiting}): 
\begin{align}
    \Pr\left( f(x_k) - f^* \lesssim \tfrac{\log k}{\sqrt{k}} \cdot \mathrm{\text{poly-log}} (\tfrac{1}{\beta})  \right) \geq 1-\beta, \quad \forall \beta \in (0,1), \; k \in \mathbb{N}_+, \label{eq:prev} 
\end{align}
where $ f^* > -\infty $ is the minimum of $ f $, $ \{ x_k \}_k $ is the sequence of iterates governed by the algorithm, and $ \lesssim $ omits constants that does not depend on $k$ or $\beta$. 

By applying a union bound to (\ref{eq:prev}) over $ k $, we obtain (See Section \ref{sec:prior-detail} for details):
\begin{align} 
    \Pr\left( \forall k \in \mathbb{N}_+, \; f(x_k) - f^* \lesssim  \frac{ \log k }{ \sqrt{k} } \cdot \mathrm{\text{poly-log}} (\tfrac{1}{\beta}) + \frac{\log^2 k}{\sqrt{k}} \right) \ge 1-\beta, \;\; \forall \beta \in (0,1), \label{eq:prev-union} 
\end{align}
or equivalently (See Lemma \ref{lemma:iff}), for any $ \beta \in (0,1), \; \text{and any stopping time } \tau$,  
\begin{align} 
    \Pr\left( f(x_\tau) - f^* \lesssim   \frac{ \log \tau }{ \sqrt{\tau } } \cdot \mathrm{\text{poly-log}} (\tfrac{1}{\beta}) + \frac{\log^2 \tau }{\sqrt{\tau }} \right) \ge 1-\beta. \label{eq:prev-stopping} 
\end{align} 




This analysis exposes a fundamental limitation in current state-of-the-art approaches:

\begin{itemize}
\item \textbf{The Adaptivity Gap}: When moving from guarantees like \eqref{eq:prev} to stopping-time guarantees \eqref{eq:prev-stopping}, we incur an extra logarithmic factor.
\end{itemize}

This limitation prompts several critical theoretical questions in stochastic optimization and stochastic approximation:

\begin{quote}
\textbf{(Q)} \emph{Can we bridge this adaptivity gap? Specifically:
\begin{itemize}
    \item[] Is the $ \log^2 \tau$ dependence in \eqref{eq:prev-stopping} an artifact of current proof techniques, or does it reflect an inherent computational barrier? Can we achieve or surpass the ``ideal'' $ \tau^{-1/2} \log \tau$ rate for stopping time convergence rate, matching what's best known for non-random iteration counts? 
\end{itemize}}
\end{quote}



\subsection{Our results}

The convergence rates established in (\ref{eq:prev}) represent a pinnacle achievement in stochastic optimization theory, making \textbf{(Q)} a significant challenge. 
Consequently, it may seem that there is little opportunity for improvement in (\ref{eq:prev-stopping}). However, in this work, we contest this assumption and achieve an improvement by surpassing a logarithmic barrier. Specifically, we establish Theorem \ref{thm:main} and Proposition \ref{prop}; A comparison with state-of-the-art results are summarized in Table \ref{tab:lip}.

\begin{theorem} 
    \label{thm:main}
    Let the objective function $f : \R^n \to \R$ be convex, $L$-smooth, and admits a minimum $f^* > -\infty$ at some $x^* \in \R^n$.  
    Instate Assumptions \ref{assump:sto-gradient} and \ref{assump:hp} (standard conditions for stochastic first-order methods specified below). 
    A variant of the classic Stochastic Gradient Descent with Momentum (SGDM) algorithm \citep{lan2012optimal} (stated in Eq. \ref{eq:SGDM}) satisfies the following: the sequence $\{ x_k \}_k$ governed by this algorithm satisfies: $\forall \beta \in (0,0.5) $, 
    \begin{align*} 
        \Pr\left(f(x_k)-f^*\leq\frac{\left(C_1+C_2\log\frac{1}{\beta}\right)\log(k+2)}{\sqrt{k+1}},\;\text{for all $k\in \mathbb{N}_+$}\right)\geq1-2\beta, 
    \end{align*} 
    and \text{for any $\beta \in (0,0.5)$ and any stopping time $\tau$}, 
    \begin{align*}
        \Pr\left(f(x_{\tau})-f^*\leq\frac{\left(C_1+C_2\log\frac{1}{\beta}\right)\log(\tau+2)}{\sqrt{\tau +1}}\right)\geq1-2\beta, 
    \end{align*}
    where $C_1$ and $C_2$ are absolute constants depending only on the problem parameters (e.g., smoothness $L$) and the starting point $x_0$. 
\end{theorem}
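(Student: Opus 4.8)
The plan is to establish the first (uniform-in-$k$) inequality; the stopping-time statement is then immediate. Indeed, on the event $\mathcal G_\beta := \bigl\{\, f(x_k)-f^* \le \tfrac{(C_1+C_2\log(1/\beta))\log(k+2)}{\sqrt{k+1}}\ \text{for all } k\in\mathbb{N}_+ \,\bigr\}$, and for \emph{any} stopping time $\tau$, every $\omega\in\mathcal G_\beta$ satisfies the claimed bound at $k=\tau(\omega)$ (on $\{\tau<\infty\}$); this is exactly the elementary direction of Lemma \ref{lemma:iff}. Hence it suffices to prove a single time-uniform high-probability bound, and the crux is to do so \emph{without} a union bound over $k$, which is precisely what produces the spurious $\log^2$ in \eqref{eq:prev-stopping}.

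Next I would derive a Lyapunov recursion for the SGDM variant. Write $e_i := g_i - \nabla f(x_{i-1})$ for the gradient noise, $\mathcal F_i$ for the natural filtration, and let $\sigma$ be the noise parameter of Assumption \ref{assump:sto-gradient}. From $L$-smoothness and convexity, the scheme \eqref{eq:SGDM} — whose reported iterate $x_k$ is a weighted average of the primal steps — admits a potential $\Phi_k$ (a weighted sum of an $f$-gap term and $\|z_k-x^*\|^2$, with $z_k$ the momentum sequence) obeying a one-step inequality
\begin{equation*}
  \gamma_i\bigl(f(x_i)-f^*\bigr) \;\le\; \Phi_{i-1}-\Phi_i \;+\; \gamma_i\langle e_i,\, x^*-z_{i-1}\rangle \;+\; c\,\gamma_i^2\|e_i\|^2 ,
\end{equation*}
with the step/weight sequence $\gamma_i$ chosen so that $\sum_{i\le k}\gamma_i \asymp \sqrt k$ (and $\sum_{i\le k}\gamma_i^2 \asymp \log k$, the source of the single intrinsic $\log k$ factor for a horizon-free schedule). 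Telescoping, using $\Phi_k\ge 0$ and $f(x_k)-f^* \le \bigl(\sum_{i\le k}\gamma_i\bigr)^{-1}\sum_{i\le k}\gamma_i(f(x_i)-f^*)$, gives
\begin{equation*}
  f(x_k)-f^* \;\lesssim\; \frac{1}{\sqrt k}\Bigl(\Phi_0 \;+\; \underbrace{\textstyle\sum_{i\le k}\gamma_i^2\|e_i\|^2}_{=:V_k} \;+\; \underbrace{\textstyle\sum_{i\le k}\gamma_i\langle e_i, x^*-z_{i-1}\rangle}_{=:M_k}\Bigr)\qquad \text{for all }k .
\end{equation*}

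The two remaining terms I would control by time-uniform concentration, and these contribute the two exceptional events that make up the ``$2\beta$''. For $V_k$: under the light-tail hypothesis of Assumption \ref{assump:hp}, a maximal Bernstein-type inequality gives, with probability $\ge 1-\beta$, $V_k \lesssim \sigma^2\log(k+2)+\log\tfrac1\beta$ for all $k$. The martingale $M_k$ is the heart of the matter: since $z_{i-1}$ is random, its predictable quadratic variation is itself random, $\langle M\rangle_k = \sum_{i\le k}\gamma_i^2\,\E[\|e_i\|^2\mid\mathcal F_{i-1}]\,\|x^*-z_{i-1}\|^2 \le \sigma^2\sum_{i\le k}\gamma_i^2\,\|x^*-z_{i-1}\|^2$, and $\|x^*-z_{i-1}\|^2\lesssim\Phi_{i-1}$ is governed by the very quantity being bounded. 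A time-uniform self-normalized martingale inequality (Freedman / method-of-mixtures form) keeps this random variation inside the estimate: on an event of probability $\ge 1-\beta$, $M_k \lesssim \sqrt{\log\tfrac1\beta}\,\sqrt{\langle M\rangle_k}$ for all $k$ simultaneously.

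Feeding these back would close a self-referential loop: with $R^*_k := \max_{i\le k}\|x^*-z_{i-1}\|$, the telescoped inequality yields $(R^*_k)^2 \lesssim \Phi_0 + V_k + \sqrt{\log\tfrac1\beta}\,R^*_k\sqrt{V_k}$, a one-sided quadratic inequality in $R^*_k$; solving it — this is the Gr\"onwall-type step the paper develops for stochastic processes, replacing the usual iterated-product Gr\"onwall (which would blow up polynomially here since $\sum\gamma_i^2\asymp\log k$) by a self-bounding argument that loses no logarithm — gives $R^*_k \lesssim \sqrt{\Phi_0} + \sigma\sqrt{\log(k+2)\log\tfrac1\beta} + \log\tfrac1\beta$ uniformly in $k$. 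Substituting back bounds $M_k \lesssim \sqrt{\Phi_0}\,\sigma\sqrt{\log(k+2)\log\tfrac1\beta} + \sigma^2\log(k+2)\log\tfrac1\beta$, and combining with the estimate for $V_k$ in the display for $f(x_k)-f^*$, together with $\sqrt{x}\le 1+x$ to absorb the $\sqrt{\log\tfrac1\beta}$-type terms, yields $f(x_k)-f^* \le \tfrac{(C_1+C_2\log(1/\beta))\log(k+2)}{\sqrt{k+1}}$ for all $k$ on an event of probability $\ge 1-2\beta$. The main obstacle — and the only genuinely new ingredient — is this time-uniform, self-referential martingale step: one must control $M_k$ for \emph{all} $k$ at once while its quadratic variation depends on the trajectory, picking up only the single intrinsic $\log k$ and a clean additive $\log\tfrac1\beta$, rather than the $\log^2$ from a union bound or the polynomial factor from a naive Gr\"onwall iteration. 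Secondary care is needed to set up the SGDM potential with the momentum coupling correctly and to verify the light-tail concentration for $V_k$.
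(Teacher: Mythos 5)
Your overall plan — prove a single time-uniform bound and read off the stopping-time statement via the easy direction of Lemma \ref{lemma:iff} — is exactly right, and you correctly identify that the adversary is the union bound. But the route you propose is genuinely different from the paper's, and it has a gap at precisely the point you call ``the only genuinely new ingredient.''

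Two structural differences first. The paper does not use the averaging step $f(x_k)-f^*\leq(\sum\gamma_i)^{-1}\sum\gamma_i(f(x_i)-f^*)$: the Lyapunov function $\ene(k)$ in \eqref{discrete-energy:alpha} already carries the term $4\sqrt{(k+1)\eta_k}\,(f(x_k)-f^*)$, so a uniform bound on $\sup_k\ene(k)$ directly yields the last-iterate rate. More importantly, the step-size regime is different: you posit $\sum_{i\le k}\gamma_i^2\asymp\log k$ as ``the source of the single intrinsic $\log k$,'' whereas the paper deliberately takes $\eta_k\asymp 1/\log^2(k+2)$ so that $a_k=16\eta_k/k\asymp 1/(k\log^2 k)$ is \emph{summable}, i.e.\ $\gamma_1=\sum_k a_k<\infty$ and $\gamma_2=\prod_k(1+a_k\sigma^2)<\infty$. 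The $\log k$ in the final rate comes from the prefactor $\sqrt{(k+1)\eta_k}\asymp\sqrt{k}/\log k$, not from a logarithmically growing variance accumulation. This choice is what makes the paper's Gr\"onwall argument close: with $\gamma_1,\gamma_2$ finite, the process $N^t(k)=\exp\bigl(\prod_{l>k}(1+a_l\sigma^2)\,tM(k)-\sum_{l\le k}a_l\sigma^2\gamma_2\,tS(l-1)\bigr)$ is a genuine supermartingale for a \emph{single fixed} $t=B/\gamma_2$, and one application of Ville's inequality (plus Lemma \ref{lemma:large-dev-1} for the additive noise sum $S(k)$) finishes the job, with no mixture, no peeling, and no union over scales.

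The gap in your plan is the asserted time-uniform self-normalized bound ``on an event of probability $\ge 1-\beta$, $M_k\lesssim\sqrt{\log\tfrac1\beta}\sqrt{\langle M\rangle_k}$ for all $k$.'' With a \emph{random, unbounded} predictable variation $\langle M\rangle_k\lesssim\sigma^2(R_k^*)^2\sum_{i\le k}\gamma_i^2$, this is not a freebie. A single Ville application gives $M_k\le c\lambda\langle M\rangle_k+\lambda^{-1}\log\tfrac1\beta$ for one fixed $\lambda$; to recover the square-root shape for all $k$ simultaneously you must mix or peel over $\lambda$, and the standard anytime confidence sequences carry an extra $\log(\langle M\rangle_k/\rho)$ inside the square root. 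Here that factor is $\log\bigl((R_k^*)^2\log k/\rho\bigr)$, which depends on the very quantity $R_k^*$ your quadratic inequality is trying to control; without an a priori deterministic bound on $R_k^*$ (or a nontrivial bootstrap), this term can regenerate a $\log k$ and with it the $\log^2\tau$ you are trying to eliminate. So the ``solve a one-sided quadratic'' step, as written, does not obviously close. The paper avoids the problem entirely: the Gr\"onwall condition in Lemma \ref{lem:gronwall} folds the random quadratic variation $\ene(k-1)$ into the exponent of the supermartingale via the multiplicative discount $\prod_{l>k}(1+a_l\sigma^2)$ and the additive correction $\sum_{l\le k}a_l\sigma^2\gamma_2\,tS(l-1)$, so that the trajectory-dependence never has to be estimated by a separate self-normalized inequality. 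If you want to pursue your route, you would need to supply, concretely, a time-uniform martingale bound that is free of the extra $\log\langle M\rangle_k$ (or a bootstrap that kills it), which is exactly the difficulty the summable $a_k$ + discounted-supermartingale device is engineered to sidestep.
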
 

In addition, we have the following variant of Theorem \ref{thm:main}. 

\begin{proposition} 
    \label{prop}
    Instate the conditions in Theorem \ref{thm:main}. 
    For any $\varepsilon \in (0,0.5)$, a variant of the classic Stochastic Gradient Descent with Momentum (SGDM) algorithm \citep{lan2012optimal} (stated in Eq. \ref{eq:SGDM}) with $\varepsilon$-dependent step-size satisfies the following: the sequence $\{ x_k \}_k$ governed by this algorithm satisfies, for any $ \beta \in (0,0.5) $, 
    \begin{align*} 
        \Pr\left(f(x_k)-f^*\leq C_0 \cdot \frac{  h^\sigma (\varepsilon) \( 1 + \log\frac{1}{\beta} \)   \log^{\frac{1+\varepsilon}{2} }(k+2)}{\sqrt{k+1}},\;\text{for all $k\in \mathbb{N}_+$}\right)\geq1-2\beta, 
    \end{align*} 
    and \text{for any $\beta \in (0,0.5)$ and any stopping time $\tau$}, 
    \begin{align*}
        \Pr\left(f(x_{\tau})-f^*\leq C_0 \cdot \frac{ h^\sigma (\varepsilon) \( 1 + \log\frac{1}{\beta} \)  \log^{\frac{1 + \varepsilon}{2} }(\tau+2)}{\sqrt{\tau +1}}\right)\geq1-2\beta, 
    \end{align*}
    where $ h^\sigma (\varepsilon) := \exp \( \sigma^2 \zeta ( 1 + \varepsilon ) \) (\zeta ( 1 + \varepsilon ))^2 $, $\zeta$ is the Riemann zeta function, and $C_0$ is an absolute constants depending only on the problem parameters (e.g., smoothness $L$) and the starting point $x_0$.  
\end{proposition}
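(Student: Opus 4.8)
The plan is to run the very same SGDM scheme \eqref{eq:SGDM} behind Theorem~\ref{thm:main}, with its weight/step-size schedule re-tuned through the exponent $\varepsilon$ so that the \emph{cumulative} gradient-noise weight becomes summable, and then to push the resulting stochastic energy recursion through the Gr\"onwall-type argument this paper develops. Concretely, I would pick $\varepsilon$-tuned averaging weights behaving like $a_k\asymp\tfrac{1}{\sqrt{k}\,(\log(k+2))^{(1+\varepsilon)/2}}$, together with the step-size and momentum parameters the scheme couples to them, so that the normalizer $A_k=\sum_{j\le k}a_j$ obeys $A_k\asymp\sqrt{k+1}\,/\,(\log(k+2))^{(1+\varepsilon)/2}$ while $\sum_{j\le k}a_j^2\asymp\sum_j\tfrac{1}{j\,(\log(j+2))^{1+\varepsilon}}$. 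This last series is finite, and by Cauchy condensation it is comparable to $\sum_{n\ge1}n^{-(1+\varepsilon)}=\zeta(1+\varepsilon)$ (the condensation constant $(\log 2)^{-(1+\varepsilon)}$ stays bounded for $\varepsilon\in(0,0.5)$ and is absorbed into $C_0$). This is exactly the mechanism that breaks the logarithmic barrier: this (more aggressively damped) schedule makes the accumulated gradient-noise weight summable, of size only $O(\zeta(1+\varepsilon))$ rather than the $\Theta(\log k)$ it would be for the schedule behind Theorem~\ref{thm:main}, at the sole cost of a $(\log(k+2))^{(1+\varepsilon)/2}$ shrinkage of $A_k$ and hence that factor in the rate.

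With the schedule fixed, the first concrete step is the deterministic one-step energy inequality for the iterates. Writing $\xi_k$ for the stochastic-gradient error at step $k$, $z_k$ for the auxiliary (mirror/momentum) iterate of \eqref{eq:SGDM}, and $\Phi_k:=A_k\bigl(f(x_k)-f^*\bigr)+\tfrac12\|z_k-x^*\|^2$ (or a close variant carrying time-dependent weights) for the Lyapunov function, convexity and $L$-smoothness give, exactly as in the proof of Theorem~\ref{thm:main} but with the new numerical values, $\Phi_k\le\Phi_{k-1}-a_k\langle\xi_k,\,z_{k-1}-x^*\rangle+b_k\|\xi_k\|^2$ with $\sum_k b_k\lesssim\sum_k a_k^2\asymp\zeta(1+\varepsilon)$. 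Summing telescopes the Lyapunov part; under Assumptions~\ref{assump:sto-gradient} and~\ref{assump:hp} the quadratic term $\sum_k b_k\|\xi_k\|^2$ concentrates and contributes only an additive drift of size $O(\sigma^2\zeta(1+\varepsilon))$, which ultimately lands inside the $(\zeta(1+\varepsilon))^2(1+\log\tfrac{1}{\beta})$ factor.

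The delicate term is the martingale $M_k:=\sum_{j\le k}a_j\langle\xi_j,\,z_{j-1}-x^*\rangle$: its conditional variance at step $j$ is $\lesssim\sigma^2 a_j^2\|z_{j-1}-x^*\|^2\lesssim a_j^2\,\Phi_{j-1}$, so an exponential-supermartingale (Chernoff) control of $M_k$ necessarily produces a compensator $\asymp\sigma^2\sum_{j\le k}a_j^2\,\Phi_{j-1}$, i.e.\ a term proportional to the very quantity being bounded. The recursion therefore takes the self-referential form $\Phi_k\lesssim\bigl(\text{initialization}+\text{noise drift}+\log\tfrac{1}{\beta}\bigr)+\sigma^2\sum_{j\le k}a_j^2\,\Phi_{j-1}$ that the stochastic Gr\"onwall-type argument of this paper is built to resolve; since $\sum_j a_j^2\asymp\zeta(1+\varepsilon)$, discrete Gr\"onwall yields, with probability at least $1-2\beta$ and simultaneously for all $k\ge 1$, $\Phi_k\lesssim\exp\bigl(\sigma^2\zeta(1+\varepsilon)\bigr)\,(\zeta(1+\varepsilon))^2\,\bigl(1+\log\tfrac{1}{\beta}\bigr)$, where the $(\zeta(1+\varepsilon))^2(1+\log\tfrac{1}{\beta})$ reflects a Freedman-type tail with cumulative conditional variance $\asymp\zeta(1+\varepsilon)$ combined with initialization/noise-drift terms of the same scale. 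Dividing by $A_k\asymp\sqrt{k+1}/(\log(k+2))^{(1+\varepsilon)/2}$ gives the stated ``for all $k$'' bound, and Lemma~\ref{lemma:iff} converts it into the stopping-time statement.

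I expect the main obstacle to be precisely this martingale step together with the exponent bookkeeping around it: one must verify that $z_{j-1}-x^*$ is absorbed \emph{cleanly} into $\Phi_{j-1}$ so that the compensator is honestly $\lesssim c_j\Phi_{j-1}$ with $\sum_j c_j=O(\sigma^2\zeta(1+\varepsilon))$, and then invoke the paper's stochastic Gr\"onwall argument with exactly the right dependence on $\beta$ so that no spurious $\log k$ re-enters and the final rate stays at $(\log(k+2))^{(1+\varepsilon)/2}/\sqrt{k+1}$ rather than degrading to a $\log^{1/2}$ or $\log^{1+\varepsilon}$ power.
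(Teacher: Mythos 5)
Your proposal follows essentially the same route as the paper's own proof. The paper also re-tunes the step size so that the Gr\"onwall weight becomes $a_k=\frac{16\eta_k}{k}\asymp\frac{1}{k\,\log^{1+\varepsilon}(k+2)}$ (your $a_j^2$ in the other parametrization), verifies $\gamma_1=\sum_k a_k\le\zeta(1+\varepsilon)$ and $\gamma_2=\prod_k(1+a_k\sigma^2)\le\exp(\sigma^2\zeta(1+\varepsilon))$, plugs these into the same supermartingale/Gr\"onwall-type argument of Lemma~\ref{lem:gronwall}, and divides by $4\sqrt{(k+1)\eta_k}\asymp\sqrt{k+1}/\log^{(1+\varepsilon)/2}(k+2)$ before invoking Lemma~\ref{lemma:iff} for the stopping-time statement. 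Your Cauchy-condensation remark supplies the (unstated in the paper) comparison between $\sum_k \frac{1}{k\log^{1+\varepsilon}(k+2)}$ and $\zeta(1+\varepsilon)$ — the paper instead just absorbs the discrepancy into the explicit constant $C_0'$ — and your identification of the compensator $\sigma^2\sum_j a_j^2\Phi_{j-1}$ with the self-referential Gr\"onwall recursion is exactly the mechanism behind Lemma~\ref{lem:gronwall}, so no genuinely new ideas or gaps are involved.
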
 



\begin{table*}[t]
    \centering
    \begin{threeparttable}
        \caption{Comparison of high probability convergence rates for stochastic optimization algorithms. The second column contains the error bounds with probability at least $1-\beta$. In the third column, `Non-random iterate $k$' means that the error bound is for $f(x_k)-f^*$, where $k$ is a non-random iterate. `Stopping time $\tau$' means that the error bound holds for $f(x_\tau)-f^*$, where $\tau$ is an $\{x_k\}$-stopping time. 
        } 
        \label{tab:lip}
        \begin{tabular}{|c|c|c|c|}        
            \hline 
             & Convergence rate & Type & Domain \\
            \hline 
            \cite{lan2012optimal,lan2020first, liu2023high}\tnote{1}
            & $\frac{1}{\sqrt{k}}\cdot{\sqrt{\log\frac{1}{\beta}} \cdot \log k}$ 
            & {\makecell{Non-random\\iterate $k$.}}
            & $\mathbb{R}^n$ 
            \\ \hline
            
            \cite{lan2012optimal,lan2020first, liu2023high}\tnote{2} 
            & $\frac{1}{\sqrt{\tau}}\left({{\log^2\tau} + \log\frac{1}{\beta} \cdot \log\tau}\right)$ 
            & Stopping time $\tau$ 
            & $\mathbb{R}^n$ 
            \\ \hline
            
            \textbf{\makecell{This work\\(Theorem \ref{thm:main})}} 
            & $\frac{1}{\sqrt{\tau}}\cdot{\log\frac{1}{\beta}\log\tau}$
            & Stopping time $\tau$ 
            & $\mathbb{R}^n$ 
            \\ \hline
            \textbf{\makecell{This work\\(Proposition \ref{prop})}}\tnote{3}  
            & \makecell{$\frac{ h (\varepsilon) (1 + \log\frac{1}{\beta})\log^{\frac{1+\varepsilon}{2} } \tau }{\sqrt{\tau}}, $\\ for any $\varepsilon \in (0,0.5)$.}  
            & Stopping time $\tau$ 
            & $\mathbb{R}^n$ 
            \\ \hline
        \end{tabular}
        \begin{tablenotes}
            \item[1] The first row shows results only for learning rates independent of the \textbf{total} iteration count; learning rates that depend on the \textbf{total} iteration count are ill-posed in our setting. 
            \item[2] The rate in this row is derived from the high probability bound (\ref{eq:up-liu}) from \cite{liu2023high} and a union bound; See Section \ref{sec:prior-detail} for details.
            \item[3] Here $ h (\varepsilon) := \exp \( \zeta ( 1 + \varepsilon ) \) (\zeta ( 1 + \varepsilon ))^2 $, with $\zeta$ being the Riemann $\zeta$ function, only depends on $\varepsilon$. 
        \end{tablenotes}
    \end{threeparttable}
\end{table*}

\begin{remark}
    The convergence rate's dependence on the stopping time $\tau$ is fundamentally more significant than its dependence on the confidence level $\beta$ and the parameter $\varepsilon$. 
    In practice, $\beta$ is a constant (e.g., $\beta = 0.001$) and $\varepsilon$ is a parameter, while $\tau$ is a random stopping time supported on $\mathbb{N}_+$ and may be arbitrarily large. 
\end{remark} 

More importantly, we prove the following lemma that controls the large deviation properties of almost super-martingales, which might be of broader interest.

    
\begin{lemma}  
    \label{lem:gronwall}
    Let $\sigma > 0$ and $B\in(0,1]$ be two constants, and $\{ a_k \}_{k \in \mathbb{N}_+}$ be a nonnegative constant sequence. Consider stochastic processes $ \{ \mathcal{E} (k) \}_{k \in \mathbb{N}} $, $ \{ \theta_k \}_{k \in \mathbb{N}} $, $ \{ \varphi_k \}_{k \in \mathbb{N}} $ and $ \{ \kappa_k \}_{ k \in \mathbb{N} } $ that satisfy the following: 
    \begin{itemize} 
        \item (Adaptiveness) For each $k\in\mathbb{N}$, let $\mathcal{F}_k=\sigma(\theta_1,\cdots\theta_k)$, and it holds that: 1. $\ene(k)$ is $\mathcal{F}_k$-measurable; 2. $ \{ \varphi_k\}_k $ and $\{ \kappa_k \}_k$ are $\mathcal{F}_{k-1}$-measurable.
        \item (Sub-Gaussianity) $ \{ \theta_k \} $ is conditionally $\sigma$-sub-Gaussian: 
        it holds that 
        \begin{align*} 
            \E \left[ \exp \( \frac{\| \theta_k \|^2}{\sigma^2} \) | \mathcal{F}_{k-1} \right] \le \exp(1). 
        \end{align*} 
        \item (Almost super-martingale condition) The inequality $ \mathcal{E} (k) - \mathcal{E} (k-1) \le a_k \| \theta_k \|^2 + \sqrt{a_k} \< \theta_k, \varphi_k \> + \kappa_k $ holds for each $k\in\mathbb{N}_+$, and 
        \begin{align*}
            \gamma_1 = \sum_{k=1}^\infty a_k < \infty, \quad 
            \gamma_2 = \prod_{k=1}^\infty \( 1 + \sigma^2 a_k \) < \infty. 
        \end{align*} 
        Also, for any $t \in (0, B ]$, 
        \begin{align*} 
            \E \left[ \exp \( t \( \sqrt{a_k} \< \theta_k, \varphi_k \> + \kappa_k \) \) | \mathcal{F}_{k-1} \right] \le \exp \( t a_k \sigma^2 \mathcal{E} (k-1) \).
        \end{align*} 
    \end{itemize} 
    Then it holds that $  \forall \beta \in (0,\frac{1}{2}) $, 
    \begin{align*} 
        &\Pr\left(\sup_{k\geq 0}\ene(k)\geq \frac{\gamma_2}{B} \left(B\ene(0)+\log\frac{1}{\beta}\right)+\left(1+\log\frac{1} {\beta}\right)\sigma^2(1+\sigma^2\gamma_1\gamma_2)\gamma_1\right) \le 2\beta  . 
    \end{align*} 
\end{lemma}

\begin{remark} 
    Lemma \ref{lem:gronwall} establishes a high-probability version of the celebrated convergence result of almost super-martingales \citep{ROBBINS1971233}, under the additional assumption of conditionally sub-Gaussian increments. Specifically, the sequence $\{\mathcal{E}(k)\}_k$ in Lemma \ref{lem:gronwall} is an almost super-martingale in the sense of Robbins-Siegmund. By imposing a sub-Gaussian condition, we derive a large-deviation bound for such almost super-martingales. 
\end{remark} 


\subsection{Highlight of Contributions}

To the best of our knowledge, our Theorem \ref{thm:main} establishes the tightest high-probability convergence result rate for any stopping time $\tau$. This rate of order $\frac{\log \tau}{\sqrt{\tau}}$ provides an improvement over prior works, which, at best, could achieve a high-probability convergence rate of order $\frac{\log^2 \tau}{\sqrt{\tau}}$ in terms of stopping time $\tau$; See Section \ref{sec:prior-detail}. 
Despite an uncountable amount of works on stochastic optimization (See Section \ref{sec:related-works} for a review), our work is the first to directly analyze convergence behavior in terms of the stopping time $\tau$ -- a crucial and practical perspective that aligns with real-world applications. Unlike existing approaches, we focus on adaptive, data-driven stopping times that better reflect real-world needs.


More importantly, we establish a lemma governing the large deviation property of almost super-martingales. This finding could be of broader interest, extending beyond the optimization community.






\textbf{Paper Organization.} The rest of the paper is organized as follows: Section \ref{sec:main} presents the proof of Theorem \ref{thm:main}; Section \ref{sec:prior-detail} discusses the state-of-the-art stopping time convergence prior to our work. Section \ref{sec:related-works} surveys related works before concluding the paper. 

\section{Main Results}
\label{sec:main}

First, we recall the classic Stochastic Gradient Descent with Momentum (SGDM) method \citep{lan2020first}. A variant of this algorithm iterates as follows: 
\begin{align}
\label{eq:SGDM} 
    \begin{split}
        x_{k+1} &= x_k + \frac{k}{k+2}(x_k - x_{k-1}) - \frac{2\sqrt{\eta_k}}{(k+2)\sqrt{k}} \, g(x_k, \xi_k), \quad k \in \mathbb{N_+}, \\
        x_1 &= x_0 \in \R^n,
    \end{split} 
\end{align}
where $\eta_k = \frac{1}{16L^2 \log^2 (k+2)}$ is the learning rate, and $g(x_k, \xi_k)$ is the stochastic gradient at $x_k$. 

With the SGDM algorithm outlined, we can proceed to the proof of Theorem \ref{thm:main}. Before diving into the details, we state some conventions and assumptions that will be used throughout the analysis.



\begin{assumption}\label{assump:convex-smooth}
    The function $f$ is convex and $L$-smooth, meaning that for any $x, y \in \R^n$, the gradient satisfies $\|\nabla f(x) - \nabla f(y)\| \leq L \|x - y\|$. In addition, $f$ attains its minimum $f^* > -\infty$ at some $x^* \in \R^n$. 
\end{assumption}

We denote by $\mathcal{F}_k$ the $\sigma$-algebra generated by all randomness up to the point of reaching $x_k$, i.e., $\mathcal{F}_k = \sigma(\xi_1, \dots, \xi_{k})$. The following assumptions are introduced to facilitate the convergence analysis.

\begin{assumption}\label{assump:sto-gradient}
    For any $k \geq 1$, the stochastic gradient $g(x_k, \xi_k)$ satisfies:
    \begin{itemize}
        \item Unbiasedness: $\E[g(x_k, \xi_k) \mid \mathcal{F}_{k-1}] = \nabla f(x_k)$.
        \item Bounded variance: $\E[\|g(x_k, \xi_k)\|^2 \mid \mathcal{F}_{k-1}] \leq \|\nabla f(x_k)\|^2 + \sigma^2$.
    \end{itemize}
\end{assumption}

\begin{assumption}\label{assump:hp}
    For any $k \geq 1$, the stochastic gradient $g(x_k, \xi_k)$ is conditionally sub-Gaussian: There exists $\sigma > 0$, such that for all $k \in \mathbb{N}_+$, 
    \begin{equation*}
        \E\left[\left.\exp\left(\frac{\|g(x_k, \xi_k) - \nabla f(x_k)\|^2}{\sigma^2}\right)\right|\mathcal{F}_{k-1} \right] \leq \exp(1). 
    \end{equation*}
\end{assumption}



Assumption \ref{assump:sto-gradient} states that the stochastic gradient is unbiased and has bounded variance, which are standard requirements in stochastic optimization.
Assumption \ref{assump:hp} imposes a sub-Gaussian tail condition on the gradient noise, which is a common and mild assumption in high-probability analysis.
With these in place, we are now ready to proceed to the proof of Theorem \ref{thm:main}.

We define the following Lyapunov function 
\begin{align}\label{discrete-energy:alpha}
        \ene(k)=\lnorm x_{k+1}+(k+1)(x_{k+1}-x_k)-x^*\rnorm^2
        +4\sqrt{(k+1)\eta_k}(f(x_{k})-f^*). 
\end{align}
The following result shows a nice property of this Lyapunov function $\ene(k)$. 

\begin{lemma}\label{lem:explicit-decay-difference-alpha}
    Let Assumption \ref{assump:convex-smooth} hold. If the stepsize sequence $\{\eta_k\}$ is monotonically decreasing, then $\{x_k\}$ generated by SGDM (\ref{eq:SGDM}) satisfies
    \begin{align}
        \begin{split}\label{ene-decay-expectation-alpha}
            \ene(k)-\ene(k-1)
            \leq&\frac{4\eta_k}{k}\lnorm g(x_{k},\xi_{k})\rnorm^2-\frac{2}{L}\sqrt{\frac{\eta_k}{k}}\|\nabla f(x_{k})\|^2-2\sqrt{\frac{\eta_k}{k}}(f(x_{k})-f^*)\\
            &+4\sqrt{\frac{\eta_k}{k}}\langle\nabla f(x_{k})-g(x_{k},\xi_{k}),\varphi_{k}\rangle
        \end{split}
    \end{align}
    for any $k\geq1$, where $\varphi_k=k(x_{k}-x_{k-1})+(x_{k}-x^*)$.
\end{lemma}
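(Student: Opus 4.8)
The plan is to follow a single step of the recursion \eqref{eq:SGDM} through the ``momentum point'' $\phi_k := x_{k+1}+(k+1)(x_{k+1}-x_k)-x^*$ and then reconcile the resulting identity with the function-value terms carried inside $\ene$.

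\textbf{Step 1 (an algebraic identity for one step).} Rearranging \eqref{eq:SGDM} gives $(k+2)(x_{k+1}-x_k)=k(x_k-x_{k-1})-\tfrac{2\sqrt{\eta_k}}{\sqrt{k}}\,g(x_k,\xi_k)$. Since $x_{k+1}+(k+1)(x_{k+1}-x_k)=x_k+(k+2)(x_{k+1}-x_k)$, substituting the previous line collapses everything to the compact recursion
\[
\phi_k=\varphi_k-\frac{2\sqrt{\eta_k}}{\sqrt{k}}\,g(x_k,\xi_k),\qquad \varphi_k=k(x_k-x_{k-1})+(x_k-x^*),
\]
and one checks directly that $\varphi_k$ is exactly $\phi_{k-1}$, the momentum point at the previous index. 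Expanding the square therefore yields $\lnorm\phi_k\rnorm^2-\lnorm\phi_{k-1}\rnorm^2=\tfrac{4\eta_k}{k}\lnorm g(x_k,\xi_k)\rnorm^2-\tfrac{4\sqrt{\eta_k}}{\sqrt{k}}\langle\varphi_k,g(x_k,\xi_k)\rangle$.

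\textbf{Step 2 (splitting the cross term).} Write $g(x_k,\xi_k)=\nabla f(x_k)+(g(x_k,\xi_k)-\nabla f(x_k))$. The noise part contributes precisely the term $4\sqrt{\eta_k/k}\,\langle\nabla f(x_k)-g(x_k,\xi_k),\varphi_k\rangle$ appearing in \eqref{ene-decay-expectation-alpha}. For the deterministic part I split $\langle\varphi_k,\nabla f(x_k)\rangle=k\langle x_k-x_{k-1},\nabla f(x_k)\rangle+\langle x_k-x^*,\nabla f(x_k)\rangle$ and lower-bound each piece: convexity gives $\langle x_k-x_{k-1},\nabla f(x_k)\rangle\ge f(x_k)-f(x_{k-1})$, while convexity together with $L$-smoothness (the standard inequality $f(y)\ge f(x)+\langle\nabla f(x),y-x\rangle+\tfrac{1}{2L}\lnorm\nabla f(x)-\nabla f(y)\rnorm^2$ applied at $y=x^*$, where $\nabla f(x^*)=0$) gives $\langle x_k-x^*,\nabla f(x_k)\rangle\ge f(x_k)-f^*+\tfrac{1}{2L}\lnorm\nabla f(x_k)\rnorm^2$. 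Since the prefactor $-\tfrac{4\sqrt{\eta_k}}{\sqrt{k}}$ is negative, these bounds turn $-\tfrac{4\sqrt{\eta_k}}{\sqrt{k}}\langle\varphi_k,\nabla f(x_k)\rangle$ into an upper bound containing exactly $-\tfrac{2}{L}\sqrt{\eta_k/k}\,\lnorm\nabla f(x_k)\rnorm^2$, the term $-\tfrac{4\sqrt{\eta_k}}{\sqrt{k}}(f(x_k)-f^*)$, and the term $-4\sqrt{k\eta_k}\,(f(x_k)-f(x_{k-1}))$.

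\textbf{Step 3 (reconciling the function-value bookkeeping).} Now add the function-value part of $\ene$, namely $4\sqrt{(k+1)\eta_k}(f(x_k)-f^*)-4\sqrt{k\eta_{k-1}}(f(x_{k-1})-f^*)$, to the bound from Step~2 and collect like terms. Writing $f(x_k)-f(x_{k-1})=(f(x_k)-f^*)-(f(x_{k-1})-f^*)$, the coefficient of $f(x_k)-f^*$ becomes $4\sqrt{\eta_k}\,\bigl(\sqrt{k+1}-\sqrt{k}-\tfrac{1}{\sqrt{k}}\bigr)$ and the coefficient of $f(x_{k-1})-f^*$ becomes $4\sqrt{k}\,\bigl(\sqrt{\eta_k}-\sqrt{\eta_{k-1}}\bigr)$. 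The elementary estimate $\sqrt{k+1}-\sqrt{k}=\tfrac{1}{\sqrt{k+1}+\sqrt{k}}\le\tfrac{1}{2\sqrt{k}}$ makes the first coefficient at most $-2\sqrt{\eta_k/k}$, and since $f(x_k)-f^*\ge 0$ this produces the term $-2\sqrt{\eta_k/k}(f(x_k)-f^*)$; monotonicity of $\{\eta_k\}$ makes the second coefficient nonpositive, so that term is dropped using $f(x_{k-1})-f^*\ge 0$. Assembling Steps~1--3 gives \eqref{ene-decay-expectation-alpha}; the case $k=1$ needs no special treatment since $x_1=x_0$ only makes $f(x_0)-f^*=f(x_1)-f^*\ge 0$, which is all the sign arguments use.

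\textbf{Where the difficulty lies.} There is no conceptual obstacle, only careful bookkeeping: the crux of Step~3 is recognizing that the ``right'' comparison is between $\sqrt{(k+1)\eta_k}$ on one side and $\sqrt{k\eta_k}+\sqrt{\eta_k/k}$ on the other, and that $\sqrt{k+1}-\sqrt{k}\le\tfrac{1}{2\sqrt{k}}$ is exactly what makes the residual $f(x_k)-f^*$ coefficient strictly negative (with a factor-$2$ margin, which is what yields the clean $-2\sqrt{\eta_k/k}$). The other place to be careful is Step~1, where one must correctly handle the interacting $(k+2)$ and $(k+1)$ factors to arrive at the clean form $\phi_k=\varphi_k-\tfrac{2\sqrt{\eta_k}}{\sqrt{k}}g(x_k,\xi_k)$ with $\varphi_k=\phi_{k-1}$.
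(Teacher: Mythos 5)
Your proof is correct and follows essentially the same route as the paper's: differencing the Lyapunov function, expanding the squared momentum-point norm, applying the same convexity and $L$-smoothness inequalities, and using $\sqrt{k+1}-\sqrt{k}\le\tfrac{1}{2\sqrt{k}}$ together with $\eta_k\le\eta_{k-1}$ to control the coefficients of $f(x_k)-f^*$ and $f(x_{k-1})-f^*$. The observation that $\varphi_k=\phi_{k-1}$, so that $\phi_k=\varphi_k-\tfrac{2\sqrt{\eta_k}}{\sqrt{k}}g(x_k,\xi_k)$, lets you expand the square directly around $\varphi_k$, which streamlines the bookkeeping compared to the paper's two rounds of substituting the recursion, but the underlying argument is the same.
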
 

The following properties should be noted in this lemma: 
\begin{itemize}
    \item \textbf{(P1)} As per how $\varphi_k$ and $\ene (k)$ are defined, we have 
    \begin{align*} 
        \ene(k) = \lnorm \varphi_{k+1} \rnorm^2 + 4\sqrt{(k+1)\eta_k}(f(x_{k})-f^*) . 
    \end{align*} 
    Since $ \varphi_k $ appears in the last term in \eqref{ene-decay-expectation-alpha}, this property will allow us to further control the right-hand-side of (\ref{ene-decay-expectation-alpha}) by the first term in the Lyapunov function $\ene(k-1)$ defined in (\ref{discrete-energy:alpha}). 
    \item \textbf{(P2)} We know that $ \varphi_k$ is $\mathcal{F}_{k-1} $-measurable. Consequently, after taking the conditional expectation or computing the conditonal MGF, the right-hand-side of (\ref{ene-decay-expectation-alpha}) includes the term $ \| \varphi_k \|^2 $, which can be bounded by $\mathcal{E} (k-1)$ (due to \textbf{(P1)}). This allows for a seamless application of an almost super-martingale argument, which is stated in Lemma \ref{lem:gronwall}. 
\end{itemize} 

Now we prove Lemma \ref{lem:explicit-decay-difference-alpha}. 



\begin{proof}[Proof of Lemma \ref{lem:explicit-decay-difference-alpha}]
    By differnencing the Lyapunov function (\ref{discrete-energy:alpha}), we get
    \begin{align}\label{eq:discrete-diff}
        \begin{split}
            &\ene(k)-\ene(k-1)\\
        \leq&\lnorm x_{k+1}+(k+1)(x_{k+1}-x_{k})-x^*\rnorm^2+4\sqrt{(k+1)\eta_{k}}(f(x_{k})-f^*)\\
        &-\lnorm x_{k}+k(x_{k}-x_{k-1})-x^*\rnorm^2-4\sqrt{k\eta_{k}}(f(x_{k-1})-f^*)\\
        \leq&2\langle 2(x_{k+1}-x_{k})+k(x_{k+1}-2x_{k}+x_{k-1}),
        x_{k+1}+(k+1)(x_{k+1}-x_{k})-x^*\rangle\\
        &-\lnorm 2(x_{k+1}-x_{k})+k(x_{k+1}-2x_{k}+x_{k-1})\rnorm^2+4\sqrt{k\eta_{k}}(f(x_{k})-f(x_{k-1}))\\
        &+2\sqrt{\frac{\eta_k}{k}}(f(x_{k})-f^*),
        \end{split}
    \end{align}
    where the first inequality follows from $\eta_{k}\leq\eta_{k-1}$, and the second inequality follows from $\|a\|^2-\|b\|^2=2\langle a-b,a\rangle-\|a-b\|^2$ and $\sqrt{k+1}-\sqrt{k}\leq\frac{1}{2\sqrt{k}}$. From (\ref{eq:SGDM}) we have
    \begin{align}\label{discrete-alpha}
        \begin{split}
            &2(x_{k+1}-x_{k})+k(x_{k+1}-2x_{k}+x_{k-1})\\
            =&(k+2)(x_{k+1}-x_{k})-k(x_{k}-x_{k-1})
        =-2\sqrt{\frac{\eta_k}{k}}g(x_{k}, \xi_{k}),
        \end{split}
    \end{align}
    and thus
    \begin{align}\label{explicit-decay-mid1}
        \begin{split}
            &\ene(k)-\ene(k-1)\\
            \leq&-4\left\langle\sqrt{\frac{\eta_k}{k}}g(x_{k}, \xi_{k}), x_{k}+(k+2)(x_{k+1}-x_{k})-x^*\right\rangle-\lnorm2\sqrt{\frac{\eta_k}{k}}g(x_{k}, \xi_{k})\rnorm^2\\
            &+4\sqrt{k\eta_{k}}(f(x_{k})-f(x_{k-1}))+2\sqrt{\frac{\eta_k}{k}}(f(x_{k})-f^*)\\
            =&-4\left\langle\sqrt{\frac{\eta_k}{k}} g(x_{k},\xi_{k}),(k+2)(x_{k+1}-x_{k})\right\rangle-\lnorm2\sqrt{\frac{\eta_k}{k}} g(x_{k},\xi_{k})\rnorm^2\\
            &+4\sqrt{k\eta_k}(f(x_{k})-f(x_{k-1}))+4\sqrt{\frac{\eta_k}{k}}(f(x_{k})-f^*-\langle g(x_{k},\xi_{k}), x_{k}-x^*\rangle)\\
            &-2\sqrt{\frac{\eta_k}{k}}(f(x_{k})-f^*).
        \end{split}
    \end{align}
    Then we note that the convexity and $L$-smoothness of $f$ gives
    \begin{equation}\label{explicit-decay-convex1}
        f^*-f(x_{k})-\langle\nabla f(x_{k}), x^*-x_{k}\rangle\geq\frac{1}{2L}\|\nabla f(x_{k})\|^2,
    \end{equation}
    and the convexity of $f$ shows that 
    \begin{equation}\label{explicit-decay-convex2}
        f(x_{k})-f(x_{k-1})\leq\langle\nabla f(x_{k}), x_{k}-x_{k-1}\rangle.
    \end{equation}
    Plugging (\ref{explicit-decay-convex1}) and (\ref{explicit-decay-convex2}) into (\ref{explicit-decay-mid1}), we have
    \begin{align*}
        &\ene(k)-\ene(k-1)\\
        \leq&4\sqrt{\frac{\eta_k}{k}}\langle g(x_k,\xi_k),-(k+2)x_{k+1}+(2k+2)x_k-kx_{k-1}\rangle\\
        &+4\sqrt{\frac{\eta_k}{k}}\langle\nabla f(x_{k})-g(x_{k},\xi_{k}),k(x_k-x_{k-1})\rangle\\
        &-\lnorm2\sqrt{\frac{\eta_k}{k}} g(x_{k},\xi_{k})\rnorm^2-\frac{2}{L}\sqrt{\frac{\eta_k}{k}}\|\nabla f(x_{k})\|^2
        +4\sqrt{\frac{\eta_k}{k}}\langle\nabla f(x_{k})-g(x_{k},\xi_{k}),x_{k}-x^*\rangle\\
        &-2\sqrt{\frac{\eta_k}{k}}(f(x_{k})-f^*).\nonumber
    \end{align*}
    Finally, we substitute (\ref{discrete-alpha}) to the RHS of the above inequality, and reorder the terms to conclude
    \begin{align*}
        \ene(k)-\ene(k-1) 
        \leq&\frac{4\eta_k}{k}\lnorm g(x_{k},\xi_{k})\rnorm^2-\frac{2}{L}\sqrt{\frac{\eta_k}{k}}\|\nabla f(x_{k})\|^2-2\sqrt{\frac{\eta_k}{k}}(f(x_{k})-f^*)\\
        &+4\sqrt{\frac{\eta_k}{k}}\langle\nabla f(x_{k})-g(x_{k},\xi_{k}), \varphi_k\rangle,
    \end{align*}
    where $\varphi_k=k(x_{k}-x_{k-1})+(x_{k}-x^*)$.
\end{proof}



\subsection{An Almost Super-martingale Analysis} 

Now we present the high-probability convergence result for Stochastic Gradient Descent with Momentum (SGDM) in terms of stopping times. Our analysis relies on the dynamical properties captured by the discrete Lyapunov function $\ene(k)$ and Lemma \ref{lem:explicit-decay-difference-alpha}.


To provide intuition, the Lyapunov function $\ene(k)$ and Lemma \ref{lem:explicit-decay-difference-alpha} describe the dynamical behavior of SGDM. Specifically, by utilizing \textbf{(P1)} and \textbf{(P2)}, Lemma \ref{lem:explicit-decay-difference-alpha} yields:
\begin{align}\label{eq:descent-property}
    \begin{split}
        \ene(k) - \ene(k-1) 
        &\leq \frac{8\eta_k}{k}\|\theta_k\|^2 + \frac{8\eta_k}{k}\|\nabla f(x_k)\|^2 - \frac{2}{L}\sqrt{\frac{\eta_k}{k}}\|\nabla f(x_k)\|^2 \\
        &\quad + 4\sqrt{\frac{\eta_k}{k}}\langle\theta_k, \varphi_k\rangle \\
        &\leq a_k\|\theta_k\|^2 + \sqrt{a_k}\langle\theta_k, \varphi_k\rangle,
    \end{split}
\end{align}
where
\begin{align}\label{eq:descent-property-def}
    \varphi_k = k(x_k - x_{k-1}) + (x_k - x^*),\; \theta_k = \nabla f(x_k) - g(x_k, \xi_k),\text{ and }a_k = \frac{16\eta_k}{k}.
\end{align}
The second inequality follows from $\eta_k \leq \frac{k}{16L^2}$ for $k \geq 1$.

The inequality (\ref{eq:descent-property}) decomposes the increment of $\ene(k)$ into two terms $ a_k \| \theta_k \|^2 $ and $ \sqrt{a_k} \< \theta_k, \varphi_k \> $. 
To bound $ \ene (k) $, we bound the conditional MGF of $ \ene (k) - \ene (k-1) $ in terms of $ \ene (k-1) $, which leads to an almost super-martingale recurrence. Next we present a key lemma that leads to the proof of Theorem \ref{thm:main}.

\begin{proof}[Proof of Lemma \ref{lem:gronwall}]
    We denote $S(k)=\sum_{l=1}^ka_l\|\theta_l\|^2$ and $M(k)=\ene(k)-S(k)$. This gives
    \begin{align}\label{eq:diff-of-M}
        M(k)-M(k-1)=(\ene(k)-\ene(k-1))-(S(k)-S(k-1))
        \leq\sqrt{a_k}\langle\theta_k,\varphi_{k}\rangle + \kappa_k.
    \end{align}

    For the moment-generating function of $M(k)$, (\ref{eq:diff-of-M}) gives, for any $t \in (0,B]$, 
    \begin{align}
        \begin{split}
            \mathbb{E}[\exp(t M(k))|\mathcal{F}_{k-1}] 
            \leq&\;  
            \exp(t M(k-1) ) \mathbb{E}[\exp (t \sqrt{a_k} \langle \theta_k,\varphi_k\rangle + t \kappa_k ) |\mathcal{F}_{k-1}] \\ 
            \le& \;  
            \exp(t M(k-1) ) \exp (t a_k \mathcal{E} (k-1) )  \\ 
            \le&\; 
            \exp( (1 + a_k \sigma^2 ) t M(k-1) + a_k \sigma^2 t S (k-1) ) 
        \end{split}
        \label{eq:mgf-decomposition}
    \end{align}
    where the second inequality follows from the almost super-martingale condition (in the lemma statement). 

    We write 
    \begin{align*} 
        N^t(k)=\exp\left(\prod_{l=k+1}^{\infty}(1+a_l \sigma^2) t M(k)-\sum_{l=1}^{k}a_l\sigma^2\gamma_2 tS(l-1)\right). 
    \end{align*} 
    Specifically, this definition gives $N^t(0)=\exp\left(\prod_{l=1}^\infty(1+a_l\sigma^2)tM(0)\right)=\exp(\gamma_2t\ene(0))$. The conditional expectation of $N^t(k)$ for $t\leq\frac{B}{\gamma_2}$ satisfies 
    \begin{align} 
        &\E[N^t(k)|\mathcal{F}_{k-1}]\nonumber\\
        =&\E\left[\left.\exp\left(\prod_{l=k+1}^{\infty}(1+a_l \sigma^2) t M(k)-\sum_{l=1}^{k}a_l\sigma^2\gamma_2 tS(l-1)\right)\right|\mathcal{F}_{k-1}\right]\nonumber\\
        =&\E\left[\left.\exp\left(\prod_{l=k+1}^{\infty}(1+a_l \sigma^2) t M(k)\right)\right|\mathcal{F}_{k-1}\right]\cdot\exp\left(-\sum_{l=1}^{k}a_l\sigma^2\gamma_2 tS(l-1)\right)\nonumber\\
        \leq&\exp\left(\prod_{l=k}^{\infty}(1+a_l \sigma^2) t M(k-1)+a_k\sigma^2\prod_{l=k+1}^{\infty}(1+a_l \sigma^2)tS(k-1)\right.\nonumber\\
        &\phantom{\exp\bigg(}\left.-\sum_{l=1}^{k}a_l\sigma^2\gamma_2 tS(l-1)\right)\label{eq:Ntk}\\
        =&N^t(k-1)\cdot\exp\left(a_k\sigma^2\prod_{l=k+1}^{\infty}(1+a_l \sigma^2)tS(k-1)-a_k\sigma^2\gamma_2 tS(k-1)\right)\nonumber\\
        \leq& N^t(k-1),\nonumber 
    \end{align} 
     where (\ref{eq:Ntk}) follows from  (\ref{eq:mgf-decomposition}). Therefore, $\{N^t(k)\}_{k=0}^\infty$ is a supermartingale for $0<t\leq\frac{B}{\gamma_2}$. Then applying Ville's inequality \citep{ville1939etude} to $\{N^t(k)\}_{k=0}^\infty$ gives 
    \begin{align}\label{eq:doob}
        \Pr\left(\sup_{k\geq0}N^t(k)\geq\exp(\alpha t)\right)\leq\exp(-\alpha t)\E[N^t(0)]=\exp(-\alpha t+\gamma_2t\ene(0)). 
    \end{align} 
    Since $\gamma_1 S(k) \geq \sum_{l=1}^k a_l S(l-1)$, we have 
    \begin{align*} 
        &\; \exp\left(\frac{B M(k)}{\gamma_2} - B \sigma^2\gamma_1S(k)\right) \\ 
        \leq&\;  
        \exp \left(\frac{B\prod_{l=k+1}^\infty(1+a_l\sigma^2)M(k)}{\gamma_2} - B \sum_{l=1}^ka_l\sigma^2 S(l-1)\right) 
        = 
        N^{\frac{B}{\gamma_2}}(k), 
    \end{align*} 
    and thus
    \begin{align}\label{eq:anytime-final1}
        \begin{split}
            &\Pr\left(\sup_{k\geq0}\left\{M(k)-\sigma^2\gamma_1\gamma_2S(k)\right\}\geq \frac{\gamma_2}{B} \left(B\ene(0)+\log\frac{1}{\beta}\right)\right)\\ 
            =&
            \Pr\left(\sup_{k\geq0}\left\{\exp\left(\frac{B M(k)}{\gamma_2} - B \sigma^2\gamma_1S(k)\right)\right\}\geq\exp\left(B\ene(0)+\log\frac{1}{\beta}\right)\right)\\
            \leq&
            \Pr\left(\sup_{k\geq0}N^{\frac{B}{\gamma_2}}(k)\geq\exp\left(B\ene(0)+\log\frac{1}{\beta}\right)\right)\leq\beta,
        \end{split}
    \end{align}
    where the last inequality follows from (\ref{eq:doob}) with $t=\frac{B}{\gamma_2}$ and $\alpha=\frac{\gamma_2}{B} \left(B\ene(0)+\log\frac{1}{\beta}\right)$.

    On the other hand, the sub-Gaussianity condition and Lemma \ref{lemma:large-dev-1} gives, for any $\beta \in (0,\frac{1}{2})$ and any $k \in \mathbb{N}$, 
    \begin{align*}
            &\Pr\left((1+\sigma^2\gamma_1\gamma_2)S(k)\geq\left(1+\log\frac{1}{\beta}\right)\sigma^2(1+\sigma^2\gamma_1\gamma_2)\gamma_1\right)\\
        \leq&\Pr\left((1+\sigma^2\gamma_1\gamma_2)S(k)\geq\left(1+\log\frac{1}{\beta}\right)\sigma^2(1+\sigma^2\gamma_1\gamma_2)\sum_{l=1}^ka_l\right)\leq\beta.
    \end{align*}
    Since $\{S(k)\}_{k=0}^\infty$ is monotonically increasing, we know from continuity of probability measure, for any $\beta \in (0,\frac{1}{2})$, 
    \begin{align}\label{eq:anytime-final2}
        &\Pr\left(\sup_{k\geq0}\left\{(1+\sigma^2\gamma_1\gamma_2)S(k)\right\}\geq\left(1+\log\frac{1}{\beta}\right)\sigma^2(1+\sigma^2\gamma_1\gamma_2)\gamma_1\right)\leq\beta.
    \end{align}

    Combining (\ref{eq:anytime-final1}) and (\ref{eq:anytime-final2}), we derive
    \begin{align*}
        &\Pr\left(\sup_{k\geq 0}\ene(k)\geq \frac{ \gamma_2 }{B} \left(B\ene(0)+\log\frac{1}{\beta}\right)+\left(1+\log\frac{1} {\beta}\right)\sigma^2(1+\sigma^2\gamma_1\gamma_2)\gamma_1\right)\\ 
        \leq&  \Pr\left(\sup_{k\geq0}\left\{M(k)-\sigma^2\gamma_1\gamma_2S(k)\right\}\geq \frac{\gamma_2}{B} \left(B\ene(0)+\log\frac{1}{\beta}\right)\right)\\ 
        &+\Pr\left(\sup_{k\geq0}\left\{(1+\sigma^2\gamma_1\gamma_2)S(k)\right\}\geq\left(1+\log\frac{1} {\beta}\right)\sigma^2(1+\sigma^2\gamma_1\gamma_2)\gamma_1\right)\\ 
        \leq&2\beta, 
    \end{align*}
    for any $\beta \in (0,\frac{1}{2})$. 
\end{proof}

\begin{proof}[Proof of Theorem \ref{thm:main}]

    We verify that the Lyapunov function $\mathcal{E} (k)$, the sequences $ \{\theta_k\} $, $\{\varphi_k\}$ and $\{a_k\}$ in (\ref{eq:descent-property-def}); the constant $\sigma$ in Assumption \ref{assump:hp}, $\kappa_k\equiv0$ and $B=1$ satisfy the conditions in Lemma \ref{lem:gronwall}. 

    Firstly, the adaptiveness condition is easily verified from the definitions of $\{\ene(k)\}_k$, $\{\varphi_k\}_k$ and $\{\kappa_k\}_k$, and the sub-Gaussianity condition follows from Assumption \ref{assump:hp}. The finiteness condition follows from (\ref{eq:descent-property}) and (\ref{eq:descent-property-def}). Definition of the Lyapunov function (\ref{discrete-energy:alpha}) yields that $\|\varphi_k\|^2\leq\ene(k-1)$, and thus Lemma \ref{lemma:large-dev-3} and Assumption \ref{assump:hp} gives $ \forall t \in (0,1] $, 
    \begin{align*}
        \mathbb{E}[\exp (t \sqrt{a_k} \langle \theta_k,\varphi_k\rangle) |\mathcal{F}_{k-1}]
        \leq
        \exp(a_k\sigma^2t^2\ene(k-1))
        \le 
        \exp(a_k\sigma^2t \ene(k-1)) ,  
    \end{align*}
    which verifies the almost super-martingale condition.
    

    Therefore, we can apply Lemma \ref{lem:gronwall} (with $B = 1$) to obtain: $\forall \beta \in (0,\frac{1}{2})$, 
    \begin{align*}
        \Pr\left(\sup_{k\geq 0}\ene(k)\geq \gamma_2 \left(\ene(0)+\log\frac{1}{\beta}\right)+\left(1+\log\frac{1} {\beta}\right)\sigma^2(1+\sigma^2\gamma_1\gamma_2)\gamma_1\right) \le 2 \beta . 
    \end{align*}
    Since $4\sqrt{(k+1)\eta_k}(f(x_k)-f^*)\leq\ene(k)$, we conclude that
    \begin{align*}
        \Pr\left(f(x_k)-f^*\leq\frac{\left(C_1+C_2\log\frac{1}{\beta}\right)\log(k+2)}{\sqrt{k+1}},\;\text{for all $k\geq0$}\right)\geq1-2\beta,
    \end{align*}
    where $C_1=L\gamma_2\ene(0)+L\sigma^2(1+\sigma^2\gamma_1\gamma_2)\gamma_1$, and $C_2=L\gamma_2+L\sigma^2(1+\sigma^2\gamma_1\gamma_2)\gamma_1$.
\end{proof}


Building on Theorem \ref{thm:main}'s analysis, we proceed to prove Proposition \ref{prop}.

\begin{proof}[Proof of Proposition \ref{prop}]
    As in the proof of Theorem \ref{thm:main}, we pick $ \eta_k $ properly so that $a_k=\frac{16\eta_k}{k} = \frac{1}{L^2 C_0' k\log^{(1+\varepsilon)}(k+2)}$, where $C_0' \ge 1$ is some constant (independent of $ \varepsilon, \beta, k $). 
    It is straightforward to verify that when $C_0'$ is large enough (e.g., $C_0' \ge 100$), it holds that 
    \begin{align*} 
        \gamma_1 := \sum_{k=1}^\infty a_k \le \zeta (1 + \varepsilon) , 
    \end{align*}
    and 
    \begin{align*}
        \gamma_2 =&\; \prod_{k=1}^\infty(1+a_k\sigma^2) =  
        \exp \log \( \prod_{k=1}^\infty(1+a_k\sigma^2) \) \\ 
        \le& \;  
        \exp \( \sum_{k=1}^\infty a_k\sigma^2 \) \le \exp \( \sigma^2 \zeta ( 1 + \varepsilon ) \) . 
    \end{align*} 


    The same argument as the proof of Theorem \ref{thm:main} yields that
    \begin{align*}
        \Pr\left(\sup_{k\geq 0}\ene(k)\geq \gamma_2\left(\ene(0)+\log\frac{1}{\beta}\right)+\left(1+\log\frac{1}{\beta}\right)\sigma^2(1+\sigma^2\gamma_1\gamma_2)\gamma_1\right)\leq2\beta.
    \end{align*}
    Since $4\sqrt{(k+1)\eta_k}(f(x_k)-f^*)\leq\ene(k)$, we conclude that
    \begin{align*}
        \Pr\left(\forall k\geq0,\;f(x_k)-f^*\leq\frac{\left(C_1+C_2\log\frac{1}{\beta}\right)\log^\frac{1+\varepsilon}{2}(k+2)}{\sqrt{k+1}}\right)\geq1-2\beta,
    \end{align*}
    where $C_1=L\gamma_2\ene(0)+L\sigma^2(1+\sigma^2\gamma_1\gamma_2)\gamma_1 \le C_0 \cdot \( \exp \( \sigma^2 \zeta ( 1 + \varepsilon ) \) ( \zeta (1 + \varepsilon) )^2 \) $, and $C_2=L\gamma_2+L\sigma^2(1+\sigma^2\gamma_1\gamma_2)\gamma_1 \le C_0 \cdot \( \exp \( \sigma^2 \zeta ( 1 + \varepsilon ) \) ( \zeta (1 + \varepsilon) )^2 \) $, for some constant $C_0$ that is independent of $k,\beta,\varepsilon$. 
\end{proof}




In the above analysis, we have established that with high probability, $ f (x_k) -f^* \lesssim \frac{\log k}{\sqrt{k}} $ \textbf{simultaneously} for all $k$. Now we investigate how such statement relates to the stopping time convergence rate. 
The following lemma makes this connection precise.  

\begin{lemma}\label{lemma:iff}
    Let $U(\cdot,\cdot)$ be a $\R^+\times\N^+\to\R^+$ function, $\beta$ be a positive constant, and $\{x_t\}_{t\in\N}$ be the sequence generated by a stochastic optimization method applied on $f$. Then
    \begin{align}\label{eq:stopping-time-convergence-2}
        \Pr\left(f(x_\tau)-f^*\leq U(\beta,\tau)\right)\geq1-\beta, \text{ for any $\{x_t\}$-stopping time $\tau$}
    \end{align}
    if and only if
    \begin{align}\label{eq:stopping-time-convergence-3}
        \Pr\left(\bigcap_{k\in\N^+}\left\{f(x_k)-f^*\leq U(\beta,k)\right\}\right)\geq1-\beta.
    \end{align}
\end{lemma}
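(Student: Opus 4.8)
The plan is to prove the two directions of the equivalence separately, using the monotonicity of the deterministic event $\bigcap_{k\in\N^+}\{f(x_k)-f^*\le U(\beta,k)\}$ as the bridge.

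For the direction \eqref{eq:stopping-time-convergence-3} $\Rightarrow$ \eqref{eq:stopping-time-convergence-2}: given any $\{x_t\}$-stopping time $\tau$, I would simply observe that on the event $A:=\bigcap_{k\in\N^+}\{f(x_k)-f^*\le U(\beta,k)\}$, we have $f(x_\tau)-f^*\le U(\beta,\tau)$ pointwise (on $\{\tau=k\}$ the $k$-th term of the intersection forces the bound, and this works simultaneously for all values of $\tau$ since $A$ controls every coordinate at once). Hence $A\subseteq\{f(x_\tau)-f^*\le U(\beta,\tau)\}$, so $\Pr(f(x_\tau)-f^*\le U(\beta,\tau))\ge\Pr(A)\ge1-\beta$. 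Here I should be a little careful if $\tau$ may take the value $+\infty$; I would either assume $\tau<\infty$ a.s., or adopt the convention that the inequality is vacuously satisfied (or that $U(\beta,\infty)$ is interpreted as a limit/supremum), and note this does not affect the argument since $A$ already lies inside the target event on $\{\tau<\infty\}$.

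For the converse \eqref{eq:stopping-time-convergence-2} $\Rightarrow$ \eqref{eq:stopping-time-convergence-3}: the idea is to choose a specific stopping time that ``detects the first failure'' of the simultaneous bound. Define
\begin{align*}
    \tau := \min\{k\in\N^+ : f(x_k)-f^* > U(\beta,k)\},
\end{align*}
with the convention $\tau=+\infty$ (or some fixed default, replaced as above) when no such $k$ exists. Since $f(x_k)$ is $\F_k$-measurable (it is a deterministic function of $x_k$, which is generated by the algorithm up to step $k$), the event $\{\tau\le k\}=\bigcup_{j\le k}\{f(x_j)-f^*>U(\beta,j)\}$ is $\F_k$-measurable, so $\tau$ is a genuine $\{x_t\}$-stopping time. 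On the event $\{\tau<\infty\}$ we have, by definition of $\tau$, that $f(x_\tau)-f^*>U(\beta,\tau)$; conversely $\{f(x_\tau)-f^*>U(\beta,\tau)\}\cap\{\tau<\infty\}=\{\tau<\infty\}$. But $\{\tau<\infty\}$ is exactly the complement of the event in \eqref{eq:stopping-time-convergence-3}. Applying hypothesis \eqref{eq:stopping-time-convergence-2} to this $\tau$ gives $\Pr(f(x_\tau)-f^*\le U(\beta,\tau))\ge1-\beta$; intersecting with $\{\tau<\infty\}$ and complementing yields $\Pr(\tau<\infty)\le\beta$, i.e. $\Pr\big(\bigcap_k\{f(x_k)-f^*\le U(\beta,k)\}\big)\ge1-\beta$.

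The only real subtlety — and the step I would be most careful about — is the handling of the ``$\tau=\infty$'' case and making sure the definition of stopping time used in \eqref{eq:stopping-time-convergence-2} indeed admits such $\tau$ (i.e. allows a.s.-infinite or defaulted values) and that $U(\beta,\tau)$ is well-defined there; everything else is a one-line set-inclusion argument. If the paper's convention is that stopping times are finite-valued, I would instead truncate, working with $\tau\wedge N$ for each fixed $N$, derive $\Pr\big(\bigcap_{k\le N}\{f(x_k)-f^*\le U(\beta,k)\}\big)\ge1-\beta$ for every $N$, and then let $N\to\infty$ using continuity of the probability measure along the decreasing sequence of events.
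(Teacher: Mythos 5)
Your proof is correct and takes essentially the same approach as the paper: both arguments exhibit the ``first failure'' stopping time (which the paper encodes via the disjoint events $E_k = \{\tau = k\}$) and handle the finiteness issue by truncation (the paper truncates at a level $k_0$ found via a contradiction argument; you truncate at an arbitrary $N$ and pass to the limit, which is an equivalent reformulation). Your explicit flag of the $\tau = \infty$ subtlety and the $\tau \wedge N$ fallback matches the paper's resolution, so the two proofs coincide in substance.
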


\begin{proof}[Proof of Lemma \ref{lemma:iff}]
    To prove the equivalence of (\ref{eq:stopping-time-convergence-2}) and (\ref{eq:stopping-time-convergence-3}), we first observe that (\ref{eq:stopping-time-convergence-3}) trivially implies (\ref{eq:stopping-time-convergence-2}). For the converse, in order to get a contradiction, we suppose that 
    \begin{align}\label{eq:stopping-time-convergence-assumption}
        \Pr\left(\bigcap_{k\in\N^+}\left\{f(x_k)-f^*\leq U(\beta,k)\right\}\right)<1-\beta.
    \end{align}
    For each $k$, we define event $E_k$ as $E_k=\left\{f(x_k)-f^*>U(\beta,k)\right\}\bigcap\{f(x_l)-f^*\leq U(\beta,l),\;\forall l<k\}$. Then (\ref{eq:stopping-time-convergence-assumption}) yields that $\Pr\left(\bigcup_{k\in\N^+}E_k\right)>\beta$, and thus there exists a $k_0$ such that $\Pr\left(\bigcup_{k\leq k_0}E_k\right)>\beta$. Then we define the random variable $\tau$ as
    \begin{align*}
        \tau(\omega)=\left\{
            \begin{aligned}
                &k,\quad\text{if $\omega\in E_k$ for $k\leq k_0$},\\
                &k_0 + 1,\quad\text{otherwise.}
            \end{aligned}
        \right.
    \end{align*}
    It is easy to verify that $\tau$ is well-defined and is a $\{x_k\}$-stopping time. From the definition we have
    \begin{align*}
        \Pr(f(x_\tau)-f^*\leq U(\beta,\tau))\leq1-\Pr\left(\bigcup_{k\leq k_0}E_k\right)<1-\beta.
    \end{align*}
    This is a contradiction to (\ref{eq:stopping-time-convergence-2}), and thus we finish the proof.
\end{proof}


\section{Stopping time convergence rate based on existing results} 
\label{sec:prior-detail}

To better contextualize our contribution, we present a stopping time convergence rate analysis based on existing results. 
For the stochastic optimization problem with the conditions and assumptions stated in Theorem \ref{thm:main}, \cite{liu2023high,liu2023revisiting} proved that the sequence $\{x_k\}$ generated by the stochastic first-order method proposed by \cite{lan2020first} and the classic stochastic gradient descent satisfy: 
\begin{align} 
    \label{eq:up-liu} 
    \Pr\left(f(x_k)-f^*\lesssim\frac{1} {\sqrt{k}}\left(\frac{1}{\eta}+\eta\cdot\log\frac{1}{\delta}\cdot\log k\right)\right)\geq1-\delta, \quad \forall k \in \mathbb{N}_+, \; \forall \delta \in (0,1), 
\end{align} 
where $\eta$ is a parameter. Eq. \eqref{eq:up-liu} is stated in Corollary B.3 in \cite{liu2023high} and Corollary C.2 in \cite{liu2023revisiting}, which is the cornerstone for Theorem 3.6 in \cite{liu2023high} and Theorem 3.3 in \cite{liu2023revisiting}. 
We can pick any $\beta \in (0,1)$, and choose $\delta=\frac{6\beta}{\pi^2k^2}$ to obtain 
\begin{align*} 
    \Pr\left(f(x_k)-f^*\lesssim\frac{1}{\sqrt{k}}\( \frac{1}{\eta} + \eta \cdot \log\frac{\pi^2 k^2}{6\beta}\log k \) \right)\geq1-\frac{6\beta}{\pi^2k^2}, \quad \forall k \in \mathbb{N}_+, \; \forall \beta \in (0,1) . 
\end{align*} 
By taking a union bound over $k$, we conclude that, $ \forall \beta \in (0,1) $, 
\begin{align*} 
    \Pr\left( \forall k \in \mathbb{N}_+, \; f(x_k )-f^*\lesssim\frac{1}{\sqrt{k}} \( \frac{1}{\eta} + \eta \cdot \log\frac{k}{\beta}\log k \) \right)\geq1- \sum_{k=1}^\infty \frac{6\beta}{\pi^2k^2} = 1- \beta . 
\end{align*} 
By Lemma \ref{lemma:iff}, the above high-probability result is equivalent to a high probability convergence in terms of any stopping time $\tau$, 
\begin{align}
    \Pr\left(  f(x_\tau )-f^*\lesssim\frac{1}{\sqrt{\tau}} \( \frac{1}{\eta} + \eta \cdot \log\frac{\tau}{\beta}\log \tau \) \right)
    \geq 1 - \beta, \quad  \forall \beta \in (0,1) , 
    \label{eq:prior-union} 
\end{align}
where $\eta$ needs to be independent of the stopping time $\tau$; Otherwise, the stopping time would be known a priori, which is absurd.
Consequently, the resulting $\log^2 \tau$ term in (\ref{eq:prior-union}) is a barrier in existing results. 




\section{Prior Arts}
\label{sec:related-works}

Although no existing works directly focus on convergence analysis in terms of stopping time, numerous ingenious researchers have contributed to the field of stochastic optimization.

Stochastic optimization has a long history, with its origins dating back to at least \cite{robbins1951stochastic,kiefer1952stochastic}. In recent years, the field has experienced significant growth, driven by the widespread use of mini-batch training in machine learning \cite{GoodBengCour16}, which induces stochasticity. Researchers have approached the problem of stochastic optimization from various perspectives. For instance, \cite{johnson2013accelerating}, \cite{allen2016variance}, \cite{allen2017katyusha}, \cite{allen2018katyusha}, and \cite{ge2019stabilized} investigated algorithms that only use full-batch training sporadically for finite-sum optimization problems. This class of algorithms are commonly known as Stochastic Variance Reduced Gradient (SVRG). 
\cite{li2019convergence}, \cite{ward2020adagrad}, and \cite{faw2022power} provided theoretical results for stochastic gradient methods with adaptive stepsizes.
\cite{zhang2020adaptive} demonstrated, both theoretically and empirically, that the heavy-tailed nature of gradient noise contributes to the advantage of adaptive gradient methods over SGD.
\cite{gadat2018stochastic, kidambi2018insufficiency, gitman2019understanding} studied the role of momentum in stochastic gradient methods. 
\cite{karimi2016linear} and \cite{madden2020high} proved the convergence of stochastic gradient methods under the PL condition. \cite{sebbouh2021almost} showed the almost sure convergence of SGD and the stochastic heavy-ball method.
The convergence of stochastic gradient methods in expectation is also well studied. \cite{shamir2013stochastic} studied the last-iterate convergence of SGD in expectation for non-smooth objective functions, and provided an optimal averaging scheme. \cite{yan2018unified} presented a unified convergence analysis for stochastic momentum methods. \cite{assran2020convergence} studied the convergence of Nesterov's accelerated gradient method in stochastic settings. \cite{liu2020improved} also provided an improved convergence analysis in expectation of a momentumized SGD, and proved the benefit of using the multistage strategy.

For theoretical analysis of stochastic optimization algorithms, an important topic is to develop convergence guarantees that hold with high probability \citep{nemirovski2009robust, lan2012optimal, nazin2019algorithms, gorbunov2020stochastic, liu2023high}. This is partially because the high probability analysis better reflects the performance of a single run of the algorithm. Along this line, the early work \cite{nemirovski2009robust} established robust stochastic approximation convergence. Later \cite{lan2012optimal} proposed accelerated stochastic methods with universal optimality for non-smooth/smooth stochastic problems; 
\cite{ghadimi2013stochastic} improved large-deviation properties via a two-phase method.
Recent studies \citep[e.g.,][]{nazin2019algorithms,gorbunov2020stochastic,cutkosky2021high,li2022high} relaxed the light-tailed noise assumption, handling heavy-tailed noise via clipping techniques. 
Adaptive stepsize methods were also studied \citep{li2020high,kavis2022high}. 
The last-iterate convergence behavior has also witnessed remarkable advances \citep{harvey2019tight,jain2021making}, and \cite{liu2023revisiting} unified the last-iterate analysis.

\section{Conclusion}

In this paper, we study the convergence behavior of stochastic optimization algorithms, in terms of stopping times. By improving the high-probability convergence rate from the order of $\frac{\log^2 \tau}{\sqrt{\tau}}$ to the order of $\frac{\log \tau}{\sqrt{\tau}}$, we have broken through a logarithmic barrier. 

Our analysis presents a new large deviation lemma for almost super-martingales (Lemma \ref{lem:gronwall}). This argument can potentially be applied  to other problems in stopping time convergence analysis of other stochastic approximation problems. 



\section*{Acknowledgement}


The authors extend their appreciation to Zijian Liu for his thoughtful comments and helpful discussions regarding prior research in this field.

\appendix


\section{Auxiliary lemmas}\label{app:proof-thm-hp}

The proof of Theorem \ref{thm:main} relies on the following two auxiliary lemmas concerning standard properties for stochastic processes. Such properties can be found in texts on probability and statistics \cite{10.1093/acprof:oso/9780199535255.001.0001,vershynin2018high,wainwright2019high}, and have appeared in classic works \cite{lan2012validation, devolder2011stochastic} in the optimization community. Here we include the proofs for completeness. 

\begin{lemma}\label{lemma:large-dev-3}
    Let $\theta_1,\cdots,\theta_k$ be a sequence of i.i.d. random variables, $\Gamma_l=\Gamma\left(\theta_{[l]}\right)$ and $\Delta_l=\Delta\left(\theta_{[l]}\right)$ be deterministic functions of $\theta_{[l]}=(\theta_1,\cdots,\theta_l)$, and $c_1,\cdots,c_k$ be a sequence of positive numbers such that:
    \begin{enumerate}
        \item $\E[\Gamma_l|\theta_{[l-1]}]=0$,\label{lem:c1}
        \item $|\Gamma_l|\leq c_l\Delta_l$,\label{lem:c2}
        \item $\E\left[\left.\exp\left(\frac{\Delta_l^2}{\sigma^2}\right)\right|\theta_{[l-1]}\right]\leq\exp(1)$,\label{lem:c3}
    \end{enumerate}
    hold for each $l\leq k$. Then, for any $\lambda\in\R$ and $l\geq1$, we have 
    \begin{align*}
        \E\lbm\left.\exp\lb\frac{\lambda\Gamma_l}{c_l\sigma}\rb\right|\theta_{[l-1]}\rbm\leq\exp\lb\frac{3\lambda^2}{4}\rb.
    \end{align*}
\end{lemma}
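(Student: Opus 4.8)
The plan is to bound the conditional MGF $\E[\exp(\lambda \Gamma_l/(c_l\sigma)) \mid \theta_{[l-1]}]$ by splitting the exponential into its even Taylor series and controlling each moment. First I would use condition \ref{lem:c2} to write $|\Gamma_l/(c_l\sigma)| \le \Delta_l/\sigma$, so that $\E[(\Gamma_l/(c_l\sigma))^{2m} \mid \theta_{[l-1]}] \le \E[(\Delta_l/\sigma)^{2m}\mid\theta_{[l-1]}]$ for every $m \ge 1$. Condition \ref{lem:c3} then lets me control these moments: from $\E[\exp(\Delta_l^2/\sigma^2)\mid\theta_{[l-1]}] \le e$ and the series expansion of $\exp$, every term $\E[(\Delta_l^2/\sigma^2)^m/m!\mid\theta_{[l-1]}]$ is at most $e$, hence $\E[(\Delta_l/\sigma)^{2m}\mid\theta_{[l-1]}] \le e\, m!$.

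Next I would expand $\exp(\lambda\Gamma_l/(c_l\sigma)) = \sum_{j\ge 0} \lambda^j (\Gamma_l/(c_l\sigma))^j / j!$ and take conditional expectation. Condition \ref{lem:c1} kills the $j=1$ term; I would also want to discard odd terms, which is the one genuinely delicate point — for $j=2m+1$ odd I would use $|\Gamma_l|^{2m+1} \le (c_l\Delta_l)\cdot(c_l\Delta_l)^{2m}$ together with $\E[\Gamma_l\mid\theta_{[l-1]}]=0$ is not directly usable, so instead I would bound $|\E[(\Gamma_l/(c_l\sigma))^{2m+1}\mid\cdot]| \le \E[(\Delta_l/\sigma)^{2m+1}\mid\cdot]$ and then relate odd moments to even ones via Cauchy–Schwarz or the crude bound $\E[(\Delta_l/\sigma)^{2m+1}] \le \frac12(\E[(\Delta_l/\sigma)^{2m}]+\E[(\Delta_l/\sigma)^{2m+2}])$. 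Plugging in the $e\,m!$ moment bound, the whole series becomes a convergent power series in $\lambda$ that I can sum in closed form and dominate by $\exp(3\lambda^2/4)$ after checking the constant — the factor $3/4$ should come out with room to spare once the numerical series is summed, so the verification is the routine-but-careful part rather than a real obstacle.

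The main obstacle is really just organizing the odd-moment terms cleanly so the resulting bound is $\exp(3\lambda^2/4)$ with a correct constant rather than something larger; everything else is a mechanical combination of \ref{lem:c1}, \ref{lem:c2}, \ref{lem:c3}. An alternative, possibly cleaner route is to first establish that the bound in \ref{lem:c3} implies $\Delta_l$ is sub-Gaussian/sub-exponential with explicit parameters, invoke a standard statement that a zero-mean random variable bounded by a sub-Gaussian quantity has $\E[\exp(\lambda X)] \le \exp(C\lambda^2\sigma^2)$, and track constants to get $C = 3/4$; I would likely present the direct series argument since it keeps the constant transparent and self-contained.
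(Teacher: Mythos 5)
There is a genuine gap, and it sits exactly where you predicted the work would be: the constant.

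Your proposed moment-by-moment argument is too lossy to yield $\exp(3\lambda^2/4)$. From $\E[\exp(\Delta_l^2/\sigma^2)\mid\cdot]\le e$ and the crude pointwise bound $(\Delta_l/\sigma)^{2m}/m! \le \exp(\Delta_l^2/\sigma^2)$ you get $\E[(\Delta_l/\sigma)^{2m}\mid\cdot]\le e\,m!$, but the factor $e$ is paid afresh for \emph{every} moment. Already at $j=2$, the Taylor coefficient of $\lambda^2$ in your expansion is $\tfrac12\E[(\Gamma_l/(c_l\sigma))^2\mid\cdot]\le\tfrac{e}{2}\approx1.36$ (or $\tfrac{e-1}{2}\approx0.86$ if you sharpen by subtracting the $m=0$ term), both of which already exceed the target coefficient $3/4$. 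There is no odd-moment bookkeeping or ``summing the series in closed form'' that rescues this: the very first nontrivial term of the series overshoots, so the overall bound cannot close. The ``room to spare'' you anticipated is not there; the constant $3/4$ is actually fairly tight, and extracting it requires paying the factor $e$ only once, raised to a power $\le 1$, not once per moment.

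The paper does something structurally different. It first removes the linear term using the pointwise inequality $e^x\le x + e^{9x^2/16}$, valid for all $x\in\R$. Conditioning, the linear term vanishes by condition \ref{lem:c1}, and condition \ref{lem:c2} reduces the remainder to $\E\lbm\exp\lb 9\lambda^2\Delta_l^2/(16\sigma^2)\rb\mid\cdot\rbm$. The key step is then Jensen's inequality applied to the \emph{concave} map $y\mapsto y^{p}$ with $p=9\lambda^2/16\le 1$ (valid for $|\lambda|\le 4/3$), giving $\lb\E[\exp(\Delta_l^2/\sigma^2)\mid\cdot]\rb^{9\lambda^2/16}\le e^{9\lambda^2/16}$, so the $e$ from condition \ref{lem:c3} is raised to a power $p<1$ rather than multiplied in. For $|\lambda|>4/3$ the paper uses a second pointwise bound $\lambda x\le\tfrac38\lambda^2 + \tfrac23 x^2$ together with Jensen at the fixed exponent $2/3$, yielding $\exp(3\lambda^2/8 + 2/3)$; the two regimes are then stitched together at $|\lambda|=4/3$ to give $\exp(3\lambda^2/4)$ uniformly. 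Your ``alternative, cleaner route'' of first establishing explicit sub-Gaussian parameters is closer in spirit, but the essential ingredient you would need to make either route work is precisely this Jensen step — expanding the exponential into moments forfeits it.
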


\begin{proof}[Proof of Lemma \ref{lemma:large-dev-3}]
    Since $\exp(x)\leq x+\exp\lb\frac{9x^2}{16}\rb$ for any $x$, we have
    \begin{align*}
        \E\lbm\left.\exp\lb\frac{\lambda\Gamma_l}{c_l\sigma}\rb\right|\theta_{[l-1]}\rbm\leq& \E\lbm\left.\frac{\lambda\Gamma_l}{c_l\sigma}\right|\theta_{[l-1]}\rbm+\E\lbm\left.\exp\lb\frac{9\lambda^2\Gamma_l^2}{16c_l^2\sigma^2}\rb\right|\theta_{[l-1]}\rbm\\
        \leq& \E\lbm\left.\exp\lb\frac{9\lambda^2\Delta_l^2}{16\sigma^2}\rb\right|\theta_{[l-1]}\rbm,
    \end{align*}
    for any $\lambda$, where the second inequality uses conditions \ref{lem:c1} and \ref{lem:c2}. From the concavity of $f(x)=x^p$ for $0<p\leq1$, we further have
    \begin{align}\label{eq:lem-con1}
        \E\lbm\left.\exp\lb\frac{\lambda\Gamma_l}{c_l\sigma}\rb\right|\theta_{[l-1]}\rbm
        \leq
        \lb\E\lbm\left.\exp\lb\frac{\Delta_l^2}{\sigma^2}\rb\right|\theta_{[l-1]}\rbm\rb^\frac{9\lambda^2}{16}
        \leq
        \exp\lb\frac{9\lambda^2}{16}\rb,
    \end{align}
    for any $0<\lambda\leq\frac{4}{3}$, where the second inequality uses condition \ref{lem:c3}. On the other hand, since $\lambda x\leq\frac{3}{8}\lambda^2+\frac{2}{3}x^2$ for any $\lambda$ and $x$, we have
    \begin{align}
        \begin{split}\label{eq:lem-con2}
            \E\lbm\left.\exp\lb\frac{\lambda\Gamma_l}{c_l\sigma}\rb\right|\theta_{[l-1]}\rbm
            \leq&
            \exp\lb\frac{3\lambda^2}{8}\rb
            \E\lbm\left.\exp\lb\frac{2\Gamma_l^2}{3c_l^2\sigma^2}\rb\right|\theta_{[l-1]}\rbm\\
            \leq&
            \exp\lb\frac{3\lambda^2}{8}\rb
            \E\lbm\left.\exp\lb\frac{2\Delta_l^2}{3\sigma^2}\rb\right|\theta_{[l-1]}\rbm\\
            \leq&\exp\lb\frac{3\lambda^2}{8}+\frac{2}{3}\rb,
        \end{split}
    \end{align}
    for any $\lambda$, where the third inequality follows from the concavity of $f(x)=x^\frac{2}{3}$ and condition \ref{lem:c3}. Combining (\ref{eq:lem-con1}) with (\ref{eq:lem-con2}) yields, for any $\lambda$,
    \begin{align*}
        \E\lbm\left.\exp\lb\frac{\lambda\Gamma_l}{c_l\sigma}\rb\right|\theta_{[l-1]}\rbm\leq\exp\lb\frac{3\lambda^2}{4}\rb,
    \end{align*}
    and thus
    \begin{align*}
        \E\lbm\left.\exp\lb\lambda\Gamma_l\rb\right|\theta_{[l-1]}\rbm\leq\exp\lb\frac{3\lambda^2c_l^2\sigma^2}{4}\rb.
    \end{align*}
\end{proof}

\begin{lemma}\label{lemma:large-dev-1}
    Let $\theta_1,\cdots,\theta_k$ be a sequence of i.i.d. random variables, $\Phi_l=\Phi\left(\theta_{[l]}\right)$ be deterministic functions of $\theta_{[l]}=(\theta_1,\cdots,\theta_l)$, and $c_1,\cdots,c_k$ be a sequence of positive numbers. If the inequality
    \begin{align*}
        \E\left[\left.\exp\left(\frac{\Phi_l^2}{\sigma^2}\right)\right|\theta_{[l-1]}\right]\leq\exp(1)
    \end{align*}
    holds for each $l\leq k$, then for any $\Omega\geq0$, we have
    \begin{align*}
        \Pr\left(\sum_{l=1}^kc_l\Phi_l^2\geq(1+\Omega)\sum_{l=1}^kc_l\sigma^2\right)\leq\exp(-\Omega).
    \end{align*}
\end{lemma}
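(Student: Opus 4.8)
The plan is to derive this tail bound from a single Chernoff-type step, but with the exponential parameter fixed implicitly by a normalization-plus-Jensen trick rather than optimized. First I would normalize the weights: set $w_l = c_l\sigma^2 \big/ \left(\sum_{j=1}^k c_j\sigma^2\right)$, so that $w_l \ge 0$ and $\sum_{l=1}^k w_l = 1$ (this is legitimate since $\sigma>0$ and the $c_l$ are positive, so the denominator is strictly positive), and record the identity
\[
Y := \frac{\sum_{l=1}^k c_l\Phi_l^2}{\sum_{l=1}^k c_l\sigma^2} = \sum_{l=1}^k w_l\,\frac{\Phi_l^2}{\sigma^2}.
\]
The event in the statement is then exactly $\{Y \ge 1+\Omega\}$, so it suffices to bound $\Pr(Y \ge 1+\Omega)$.

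Next I would control $\E[e^{Y}]$. Since $x\mapsto e^x$ is convex and $\{w_l\}$ are convex weights, Jensen's inequality gives the pointwise bound $e^{Y} \le \sum_{l=1}^k w_l \exp(\Phi_l^2/\sigma^2)$. Taking the full (unconditional) expectation of the hypothesis $\E[\exp(\Phi_l^2/\sigma^2)\mid\theta_{[l-1]}]\le\exp(1)$ via the tower property yields $\E[\exp(\Phi_l^2/\sigma^2)]\le\exp(1)$ for each $l\le k$; hence $\E[e^{Y}]\le \sum_{l=1}^k w_l\, e = e$. Finally, Markov's inequality applied to the nonnegative random variable $e^{Y}$ with threshold $e^{1+\Omega}$ gives
\[
\Pr(Y\ge 1+\Omega) = \Pr\!\left(e^{Y}\ge e^{1+\Omega}\right) \le e^{-(1+\Omega)}\,\E[e^{Y}] \le e^{-\Omega},
\]
which is the claimed inequality once the definition of $Y$ is substituted back.

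I do not anticipate a genuine obstacle: the only points deserving a word of care are the (vacuous) non-degeneracy of the denominator $\sum_l c_l\sigma^2$ and the observation that the filtration/martingale structure of the hypothesis is not actually used here — unlike in Lemma \ref{lemma:large-dev-3}, only the unconditional moment-type bound $\E[\exp(\Phi_l^2/\sigma^2)]\le e$ enters, so no martingale machinery is required. If space permits I would note this, since it clarifies why this lemma is considerably softer than the preceding one.
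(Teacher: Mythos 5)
Your argument is correct and is essentially the paper's proof: both normalize by $\sum_l c_l\sigma^2$ to get convex weights, apply Jensen's inequality to $e^x$, pass from the conditional MGF bound to the unconditional one via the tower property, and finish with Markov's inequality at threshold $e^{1+\Omega}$. Your closing remark that the filtration plays no real role here (only the marginal bound $\E[\exp(\Phi_l^2/\sigma^2)]\le e$ is used) is accurate and a nice observation, though it is left implicit in the paper.
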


\begin{proof}[Proof of Lemma \ref{lemma:large-dev-1}]
    Since $f(x)=e^x$ is convex, we have
    \begin{align*}
        \E\lbm\exp\lb\frac{\sum_{l=1}^kc_l\Phi_l^2}{\sum_{l=1}^kc_l\sigma^2}\rb\rbm=\E\lbm\exp\lb\frac{\sum_{l=1}^kc_l\sigma^2\frac{\Phi_l^2}{\sigma^2}}{\sum_{l=1}^kc_l\sigma^2}\rb\rbm
        \leq\frac{\sum_{l=1}^kc_l\sigma^2\E\lbm\exp\lb\frac{\Phi_l^2}{\sigma^2}\rb\rbm}{\sum_{l=1}^kc_l\sigma^2}.
    \end{align*}
    For each $l$, we have $\E\lbm\exp\lb\frac{\Phi_l^2}{\sigma^2}\rb\rbm=\E\lbm\E\lbm\left.\exp\lb\frac{\Phi_l^2}{\sigma^2}\rb\right|\theta_{[l-1]}\rbm\rbm\leq\exp(1)$, and thus
    \begin{align*}
        \E\lbm\exp\lb\frac{\sum_{l=1}^kc_l\Phi_l^2}{\sum_{l=1}^kc_l\sigma^2}\rb\rbm\leq\exp(1).
    \end{align*}
    Then Markov inequality gives, for all $\delta>0$,
    \begin{align*}
        \Pr\lb\exp\lb\frac{\sum_{l=1}^kc_l\Phi_l^2}{\sum_{l=1}^kc_l\sigma^2}\rb\geq\delta\rb\leq\frac{\exp(1)}{\delta}.
    \end{align*}
    Therefore, for any $\Omega\geq0$, by letting $\delta=\exp(1+\Omega)$ and using the monotonicity of $f(x)=e^x$, we conclude that
    \begin{equation*}
        \Pr\left(\sum_{l=1}^kc_l\Phi_l^2\geq(1+\Omega)\sum_{l=1}^kc_l\sigma^2\right)\leq\frac{\exp(1)}{\exp(1+\Omega)}=\exp(-\Omega).
    \end{equation*}
    Hence, we arrive at the desired concentration bound. 
\end{proof} 

\bibliographystyle{plain} 
\bibliography{references}

\end{document}